\documentclass[12pt,a4paper,oneside]{amsart}
\usepackage{a4,a4wide}
\usepackage{amsfonts,amsmath,amssymb,amsthm,enumitem}

\usepackage{mathtools}

\usepackage{color}
\usepackage{ifpdf}
\ifpdf
    \pdfcompresslevel=9
    \usepackage[pdftex]{graphicx}
    \DeclareGraphicsExtensions{.png,.pdf,.jpg}
\else
   \usepackage[dvips]{graphicx}
   \DeclareGraphicsExtensions{.eps}
\fi
\graphicspath{{.}{figures/}}
\usepackage[latin1]{inputenc}
\numberwithin{equation}{section}

\usepackage{doi}

\newtheorem{theorem}{Theorem}[section]
\newtheorem{lemma}[theorem]{Lemma}

\newtheorem{remark}[theorem]{Remark}
\newtheorem*{theorem*}{Theorem}
\newtheorem*{lemma*}{Lemma}
\newtheorem*{proposition*}{Proposition}
\newtheorem*{corollary*}{Corollary}

\def\R{\mathbb{R}}

\def\T{\mathbb{T}}

\def\Z{\mathbb{Z}}
\def\N{\mathbb{N}}
\def\EE{\mathbb{E}}

\def\P{\mathbb{P}}

\renewcommand{\phi}{\varphi}

\def\1{\mathbf{1}}

\def\XXint#1#2#3{{\setbox0=\hbox{$#1{#2#3}{\int}$ }
\vcenter{\hbox{$#2#3$ }}\kern-.57\wd0}}

\def\lt{\left}
\def\rt{\right}
\def\les{\lesssim}

\def\Id{\textrm{Id}}
\newcommand{\bra}[1]{\left( #1 \right)}
\newcommand{\sqa}[1]{\left[ #1 \right]}

\newcommand{\abs}[1]{\left| #1 \right|}
\newcommand{\nor}[1]{\left\| #1 \right\|}

\begin{document}
\title[Almost sharp rates in the optimal matching problem]{
Almost sharp rates of convergence for the average cost and displacement in the optimal matching problem}
\author{Michael Goldman \address[Michael Goldman]{CMAP, CNRS, \'Ecole polytechnique, Institut Polytechnique de Paris, 91120 Palaiseau,
France} \email{michael.goldman@cnrs.fr} Martin Huesmann \address[Martin Huesmann]{Universit\"at M\"unster,  Germany} \email{martin.huesmann@uni-muenster.de}
  Felix Otto \address[Felix Otto]{Max Planck Institute for Mathematics in the Sciences, 04103 Leipzig, Germany} \email{Felix.Otto@mis.mpg.de}}

\begin{abstract}
In this note we prove estimates for the average cost in the quadratic optimal transport problem on the two-dimensional flat torus which are optimal up to a double logarithm. We also prove sharp estimates on the displacement. This is  based on the combination of a post-processing of our  quantitative linearization result together with a quasi-orthogonality property.
\end{abstract}

\date{\today}
\maketitle

\section{Introduction}
The aim of this note is to  improve the currently best known rates of convergence of the average cost in the quadratic optimal transport problem on the two-dimensional flat torus from \cite{AmGl19}. Compared with the conjecture from Caracciolo and al. in \cite{CaLuPaSi14}, our rate is optimal up to a double logarithm. We first use our
 quantitative linearization result from \cite{GHO} (in its post-processed version of \cite{GoHu22}) to improve the estimates from \cite{AmGlaTre} on the optimal transport map. We then combine this with a quasi-orthogonality property first observed in \cite[(3.28)]{GO} and  relatively standard heat kernel estimates to conclude.\\

To state our main result let us set some notation. We work on the 2 dimensional flat torus $\T_1=(\R/\Z)^2$  and  consider $(X_i)_{i\ge 1}$  a family of i.i.d. uniformly distributed random variables on $\T_1$. For $n\ge 1$ we  define the empirical measure as
$$ \mu_n=\frac1n \sum_{i=1}^n\delta_{X_i}.$$
The (quadratic) optimal matching problem on the torus is then 
$$ \EE\sqa{\inf_{\pi\in \mathsf{Cpl}(\mu_n, 1)} \int_{\T_1\times \T_1} |x-y|^2 d\pi(x,y) }=: \EE\sqa{W^2_2(\mu_n,1)}.$$
Here, we identify the Lebesgue measure on $\T_1$ with the constant density 1, $\mathsf{Cpl}(\mu_n, 1)$ denotes the couplings between $\mu_n$ and the Lebesgue measure on $\T_1$, and $W_2$ is the $L^2$ Kantorovich Wasserstein distance on $\T_1$.

While it is known since the seminal article \cite{AKT84} that $ n\EE\sqa{W^2_2(\mu_n,1)}\sim \log n$, it was recently conjectured in  \cite{CaLuPaSi14} that
\begin{equation}\label{conj_Caracciolo}
 \lim_{n\to \infty} \bra{n\EE\sqa{W^2_2(\mu_n,1)} -\frac{\log n}{4\pi}} \in \R.
\end{equation}
Our main result in this direction is the following theorem.
\begin{theorem}\label{theo:maincost}
We have\footnote{The notation $A\les B$, which we use in output statements, means that there exists a universal
constant $C>0$ such that $A\le C B$.}
\begin{align}\label{eq:ncost}
\abs{n\EE\sqa{W^2_2(\mu_n,1)} - \frac{\log n}{4\pi}} \les \log \log n.
\end{align}

\end{theorem}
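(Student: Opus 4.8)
The plan is to compare $\EE[W_2^2(\mu_n,1)]$ with an explicitly computable linearized energy. Fix a regularization scale $t\in(0,1)$, to be taken of order $n^{-1}$ up to a poly\-logarithmic factor; let $\mu_n^t:=P_t\mu_n$ denote the heat-semigroup regularization of $\mu_n$ (for the normalization $\partial_t=\Delta$), and let $h_t$ solve $-\Delta h_t=\mu_n^t-1$ on $\T_1$ with zero mean. The goal is the two-sided estimate
\begin{equation*}
\Big|\,\EE\big[W_2^2(\mu_n,1)\big]-\EE\!\int_{\T_1}|\nabla h_t|^2\,\Big|\les\frac{\log\log n}{n}
\end{equation*}
together with the evaluation of the linearized energy on the right.

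For the latter, since the $X_i$ are i.i.d.\ uniform one has $\EE\big[|\widehat{\mu_n}(k)|^2\big]=n^{-1}$ for $k\neq0$; expanding in Fourier series and using the short-time asymptotics of the resulting lattice sum (equivalently, of the heat trace on $\T_1$),
\begin{equation*}
n\,\EE\!\int_{\T_1}|\nabla h_t|^2=\sum_{k\neq0}\frac{e^{-8\pi^2|k|^2t}}{4\pi^2|k|^2}=\frac{1}{4\pi}\log\frac1t+O(1).
\end{equation*}
Hence for $t$ equal to $n^{-1}$ times a polylogarithmic factor this equals $\tfrac{1}{4\pi}\log n+O(\log\log n)$: the freedom in the choice of $t$ is precisely what makes the main term harmless.

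For the two-sided estimate I would use the optimal map: since the Lebesgue measure is absolutely continuous, $W_2^2(\mu_n,1)=\int_{\T_1}|S-\mathrm{id}|^2$ for a map $S$ with $S_\#(\mathrm{Leb})=\mu_n$; writing $g:=S-\mathrm{id}$ for the displacement and expanding,
\begin{equation*}
\int_{\T_1}|g|^2=\int_{\T_1}|\nabla h_t|^2+\int_{\T_1}|g-\nabla h_t|^2+2\int_{\T_1}\nabla h_t\cdot(g-\nabla h_t).
\end{equation*}
The term $\int|g-\nabla h_t|^2$ is the linearization error, whose expectation the post-processed quantitative linearization result of \cite{GHO,GoHu22} controls by $n^{-1}$ times (roughly) a double logarithm; this improves the estimates of \cite{AmGlaTre} on the optimal map and, together with the constraint on $t$, is the source of the $\log\log n$ in \eqref{eq:ncost}. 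For the cross term, integration by parts turns it into $-2\int h_t\,\nabla\cdot(g-\nabla h_t)$; since $\nabla\cdot g=1-\mu_n$ up to a nonlinear Monge--Amp\`ere correction, while $\nabla\cdot\nabla h_t=1-\mu_n^t$, the leading contribution is the purely spectral quantity $2\int h_t(\mu_n-\mu_n^t)$, whose expectation is $\tfrac2n\sum_{k\neq0}\tfrac{e^{-4\pi^2|k|^2t}(1-e^{-4\pi^2|k|^2t})}{4\pi^2|k|^2}=O(n^{-1})$ because the multiplier is concentrated at the single dyadic frequency $|k|\sim t^{-1/2}$; the nonlinear correction is handled by combining the \cite{GHO,GoHu22} estimates with the quasi-orthogonality of \cite[(3.28)]{GO}. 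This is where the gain over \cite{AmGl19} lies: a bare Cauchy--Schwarz bound of the cross term against $\|\nabla h_t\|_{L^2}\sim(\log n/n)^{1/2}$ would only give the previously known rate $n^{-1}\sqrt{\log n\,\log\log n}$. Discarding $\int|g-\nabla h_t|^2\ge0$ yields the lower bound on $W_2^2$ from the cross-term and heat-kernel estimates alone; using the upper bound on its expectation yields the matching upper bound. Taking expectations, combining with the main-term asymptotics and choosing $t$ optimally gives \eqref{eq:ncost}.

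The crux, and the reason for the surviving double logarithm, is the conflicting demands on $t$: the quantitative linearization of $g$ by $\nabla h_t$ (and the control of the Monge--Amp\`ere correction) is only effective above the scale at which the sample resolves the uniform density — morally $\sqrt t$ a poly-log-logarithmic factor above the typical spacing $n^{-1/2}$ — whereas reaching the constant $\tfrac1{4\pi}$ requires contributions down to $\sqrt t\sim n^{-1/2}$. Pushing $\sqrt t$ to that threshold while keeping the linearization error, summed over the $\Theta(\log n)$ intermediate dyadic scales, of size only $O(\log\log n)$ rather than the $O(\sqrt{\log n})$ that a naive scale-by-scale estimate would produce, is the delicate step, and this residual loss is exactly what separates \eqref{eq:ncost} from the conjecture \eqref{conj_Caracciolo}.
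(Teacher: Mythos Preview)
Your overall architecture matches the paper's exactly: expand $|g|^2=|\nabla h_t|^2+|g-\nabla h_t|^2+2\nabla h_t\cdot(g-\nabla h_t)$, use the quantitative linearization (this is \eqref{eq:nmap}) for the middle term, the trace formula for the first, and show the cross term has expectation $O(n^{-1})$. Your spectral computation for the cross term is also correct and coincides with what the paper obtains, namely $\EE\int|\nabla f_{n,t/2}|^2-\EE\int|\nabla f_{n,t}|^2=O(n^{-1})$.

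The gap is in how you reach that spectral term. The integration by parts $\int\nabla h_t\cdot g=-\int h_t\,\nabla\!\cdot g$ and the identification ``$\nabla\!\cdot g=1-\mu_n$ up to a Monge--Amp\`ere correction'' do not hold here: $T_n$ pushes Lebesgue onto a \emph{discrete} measure, so it is piecewise constant on Laguerre cells, $g=T_n-\mathrm{id}$ has $\nabla\!\cdot g=-2$ in each cell interior plus singular contributions on the cell boundaries, and this distribution is nowhere near $1-\mu_n$. The paper avoids this entirely by Taylor-expanding $h_t$ along the transport (equivalently, geodesic interpolation):
\[
\int_{\T_1} g\cdot\nabla h_t=\int_{\T_1}\bigl(h_t\circ T_n-h_t\bigr)+O\bigl(\|\nabla^2 h_t\|_\infty\,W_2^2(\mu_n,1)\bigr)=\int_{\T_1} h_t\,d(\mu_n-1)+O\bigl(\|\nabla^2 h_t\|_\infty\,W_2^2\bigr),
\]
using only that $(T_n)_\#1=\mu_n$ and that $h_t\in C^2$. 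The missing ingredient in your sketch is the $L^\infty$ Hessian bound $\EE[\|\nabla^2 h_{t_n}\|_\infty^4]^{1/4}\lesssim(\log n)^{-1}$, which holds only for $t_n\sim n^{-1}\log^3 n$ and, combined with $n\EE[(W_2^2)^2]^{1/2}\lesssim\log n$, makes this remainder $O(n^{-1})$. Invoking the quasi-orthogonality of \cite[(3.28)]{GO} is the right spirit but not a substitute for this estimate.

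This also corrects your account of where the $\log\log n$ comes from. There is no summation over $\Theta(\log n)$ dyadic scales in the argument; the Hessian bound forces $t_n=n^{-1}\log^3 n$, and then \eqref{eq:nmap} gives $n\,\EE\int|g-\nabla h_{t_n}|^2\lesssim\log(t_n n)\sim\log\log n$ directly. The double logarithm is thus the price of taking $t$ just large enough to make $\nabla^2 h_t$ bounded in $L^\infty$, not a residual from a multiscale iteration.
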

In light of conjecture \eqref{conj_Caracciolo}, the error in  \eqref{eq:ncost} is optimal up to a double logarithm. This essentially improves by $\sqrt{\log n}$ the rate obtained in \cite{AmGl19} which built on the approach from \cite{AmStTr16} where the leading order term in \eqref{conj_Caracciolo} was identified. Let us however point out that our result currently holds only in the case of the flat torus while \cite{AmGl19,AmStTr16} covers any Riemannian manifold without boundary as well as the case of the unit cube. Moreover, since we mostly rely on \cite{GHO}, we are currently not able to treat the bi-partite problem.  We refer to \cite{ledoux2019optimal,Le17,BeCa,BaBo13, DeScSc13,goldman2021convergence,ambrosio2022quadratic,goldman2022optimal,caglioti2023random} for related  results.\\

As in \cite{AmStTr16,AmGl19} our proof of \eqref{eq:ncost} is based on the linearization ansatz proposed in this context by \cite{CaLuPaSi14} and which we now recall. If $\pi_n=(T_n, \Id)_\# 1$ is the optimal (random) coupling between $\mu_n$ and $1$, i.e.
\[
 W^2_2(\mu_n,1)=\int_{\T_1\times \T_1} |x-y|^2 d\pi_n,
\]
this ansatz postulate that $T_n(y)-y$ is well approximated by $\nabla f_n(y)$ where $f_n$ solves the Poisson equation
\[
 -\Delta f_n =\mu_n-1.
\]
As understood since \cite{AmStTr16}, this ansatz can only make sense after some regularization. For $t>0$, let  $p_t$ be the heat kernel at time $t$ on $\T_1$ and set $f_{n,t}=p_t\ast f_n$ be a  solution of
\[
 -\Delta f_{n,t} =p_t\ast(\mu_n-1).
\]
As a consequence of  the trace formula\footnote{when integrating with respect to the Lebesgue measure we drop the factor $dy$}, see e.g. \cite[Lemma 3.14]{AmGl19},
\begin{equation}\label{traceformula}
 n\EE\sqa{\int_{\T_1} |\nabla f_{n,t}|^2}= \frac{|\log t|}{4\pi} + O(\sqrt{t}).
\end{equation}
In order to prove \eqref{eq:ncost}, it is thus 'enough' to prove that for some $t_n$ with $|\log t_n|= \log n +O(\log \log n)$,
\[
 n\abs{\EE\sqa{W^2_2(\mu_n,1)} - \EE\sqa{\int_{\T_1} |\nabla f_{n,t}|^2} } \les \log \log n.
\]
The main step to prove this is our second main result.

\begin{theorem}\label{theo:mainmap}
For any $t\ge \frac1n=r_n^2$ we have
\begin{equation}\label{eq:mapPoisson}
n\EE\sqa{\int_{\T_1} \abs{x-y-\nabla f_{n,t}(x)}^2d\pi_n} \les 1 + \log\bra{\frac{t}{r_n^2}}.
\end{equation}
Moreover, for $t\ge t_n=   n^{-1} \log^3 n$ we have
\begin{align}\label{eq:nmap}
n\EE\sqa{\int_{\T_1} \abs{T_n(y)-y-\nabla f_{n,t}(y)}^2} \les \log\bra{\frac{t}{r_n^2}}.
\end{align}
\end{theorem}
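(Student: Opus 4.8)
The plan is to upgrade the known quantitative linearization estimate from \cite{GHO,GoHu22} — which controls the displacement $T_n(y)-y$ against the gradient of a \emph{locally} harmonic/Poisson-type corrector on a mesoscopic scale — into the global statement \eqref{eq:mapPoisson}--\eqref{eq:nmap}, where the comparison field is the globally defined $\nabla f_{n,t}$. The first step is to recall the $\epsilon$-regularity / quantitative linearization result at the scale $R$: on a ball $B_R$ with $R$ between $r_n=n^{-1/2}$ and $O(1)$, one has, with high probability and after averaging, a bound of the form $\dashint_{B_R}|T_n(y)-y-\nabla \phi_R|^2 \lesssim R^2 \cdot (\text{energy excess} + \text{data error})$, where $\phi_R$ solves $-\Delta \phi_R = \mu_n-1$ locally with suitable boundary conditions. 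Summing (or rather integrating) this over a dyadic family of scales from $r_n$ up to the macroscopic scale, and using the expected-energy bound $n\EE\sqa{\int |\nabla f_{n,t}|^2}\sim |\log t|/4\pi$ from \eqref{traceformula} to control the accumulated excess, should produce the logarithmic factor $\log(t/r_n^2)$: each dyadic scale contributes an $O(1/n)$ error, and there are $O(\log(t/r_n^2))$ of them between the regularization scale $\sqrt t$ and $r_n$.

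The second step is to pass from the local corrector to the global heat-regularized potential $f_{n,t}$. Here I would use the heat kernel: $\nabla f_{n,t} = p_t * \nabla f_n$, and the difference between $\nabla f_{n,t}$ and a local solution $\nabla \phi_R$ on the relevant scale $R\sim\sqrt t$ is governed by standard interior estimates for the Poisson equation together with Gaussian decay of $p_t$ and its derivatives — this is exactly the type of "relatively standard heat kernel estimates" flagged in the introduction. The point is that at scale $\sqrt t$ the mollification $p_t*$ does not see the fine structure, so $\nabla f_{n,t}$ agrees with the local harmonic-type corrector up to a controlled error, and the mollification also regularizes the right-hand side $\mu_n-1$ enough that the "data error" terms are under control once $t\ge 1/n$ (so that $\sqrt t \ge r_n$, i.e.\ the regularization scale is above the typical interparticle distance). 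For \eqref{eq:nmap}, the extra requirement $t\ge t_n = n^{-1}\log^3 n$ is presumably needed to absorb the small-probability bad events from the $\epsilon$-regularity theory: one needs $\sqrt t$ comfortably above $r_n$ by a polylogarithmic margin so that the failure probability of local regularity, integrated against the (bounded) worst-case contribution, is negligible; on the bad set one controls $\int|T_n(y)-y|^2$ and $\int|\nabla f_{n,t}|^2$ crudely (by the global transport cost, respectively by deterministic $L^\infty$ heat-kernel bounds on $f_{n,t}$ scaling like a power of $n$) and multiplies by the super-polynomially small probability.

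I expect the main obstacle to be the \emph{bookkeeping of the multi-scale sum}: making precise how the local quantitative-linearization errors at each dyadic scale add up to exactly $\log(t/r_n^2)$ rather than, say, $(\log(t/r_n^2))^2$, which requires that the energy excess be genuinely telescoping (a monotonicity/almost-orthogonality of the energy across scales, cf.\ the quasi-orthogonality property mentioned in the introduction) rather than being re-estimated at each scale. A secondary difficulty is handling the boundary-layer mismatch between the \emph{local} corrector $\phi_R$ (with its own boundary data on $\partial B_R$) and the \emph{global} $f_{n,t}$: one must show the harmonic correction reconciling the two boundary conditions is of lower order, which again uses the expected-energy identity to control the Dirichlet energy of that harmonic piece. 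Once these are in place, \eqref{eq:mapPoisson} follows by taking expectations of the summed estimate, and \eqref{eq:nmap} follows by the additional bad-event argument described above.
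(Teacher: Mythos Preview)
Your proposal misidentifies the mechanism behind each of the three main ingredients, and the paper's argument is considerably simpler than what you outline.

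First, there is no multi-scale dyadic summation in the proof of \eqref{eq:mapPoisson}. The paper uses \emph{stationarity} to rewrite the global integral as a local one over a single ball $B_{r_n}$, and then applies the quantitative linearization result \cite[Proposition~4.6]{GoHu22} \emph{once}: that proposition already packages the full Campanato-type iteration into the existence of a random radius $r_{*,n}\ge r_n$ with bounded moments such that $|x-y-\nabla\phi_n^{r_{*,n}}(0)|\lesssim r_{*,n}$ on $\mathrm{Spt}\,\pi_n\cap(B_{r_n}\times\T_1)$. So the obstacle you flag --- getting $\log(t/r_n^2)$ rather than $(\log(t/r_n^2))^2$ from a telescoping sum --- simply does not arise. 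The logarithm comes from a completely different place: a direct heat-kernel computation (Lemma~\ref{lem:heat}) showing $n\EE[\int|\nabla f_{n,r_n^2}-\nabla f_{n,t}|^4]^{1/2}\lesssim 1+\log(t/r_n^2)$, i.e.\ from changing the regularization time, not from summing over scales.

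Second, there is no boundary-layer issue. The ``local corrector'' $\phi_n^{r}$ in the paper is not a solution of a local Dirichlet problem on $B_R$; it is the \emph{globally} defined solution of $-\Delta\phi_n^{r}=\eta_r*(\mu_n-1)$ on $\T_1$ with a compactly supported mollifier $\eta_r$. Passing from $\nabla\phi_n^{r_n}$ to $\nabla f_{n,r_n^2}$ is a kernel-switching lemma (Lemma~\ref{lem:changekernel}) proved by direct heat/Green kernel estimates, with no harmonic correction to reconcile.

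Third, the threshold $t_n=n^{-1}\log^3 n$ in \eqref{eq:nmap} has nothing to do with bad-event probabilities. The only difference between \eqref{eq:mapPoisson} and \eqref{eq:nmap} is replacing $\nabla f_{n,t}(x)$ by $\nabla f_{n,t}(y)$, and this is handled by the crude bound $|\nabla f_{n,t_n}(x)-\nabla f_{n,t_n}(y)|\le\|\nabla^2 f_{n,t_n}\|_\infty|x-y|$ together with the moment bound $\EE[\|\nabla^2 f_{n,t_n}\|_\infty^4]^{1/4}\lesssim(\log n)^{-1}$ from Lemma~\ref{lem:heat}; it is precisely this $L^\infty$ Hessian estimate that forces $t\ge t_n$.
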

Notice that the difference between \eqref{eq:mapPoisson} and \eqref{eq:nmap} is that we replaced $\nabla f_{n,t}(x)$ in the former  by $\nabla f_{n,t}(y)$ in the latter. While both have sharp dependence in $t$, we can go down to the microscopic scale $t=r_n^2=n^{-1}$ in \eqref{eq:mapPoisson} but are restricted to mesoscopic scales $t\ge t_n\gg n^{-1}$ in \eqref{eq:nmap}. This is most likely an artefact from our proof. Indeed, we derive \eqref{eq:nmap} from \eqref{eq:mapPoisson} combined with an $L^\infty$ bound on $\nabla^2 f_{n,t_n}$. This imposes the choice $t_n\gg n^{-1}$ (see \eqref{LinftyHess} of  Lemma \ref{lem:heat}). Still, \eqref{eq:nmap}
 improves  by $\sqrt{\log n}$ a similar bound from \cite{AmGlaTre} (see also the recent generalization \cite{ClMa23}).

  The proof of \eqref{eq:mapPoisson}  is mostly based on the quantitative linearization result from \cite{GHO} in its post-processed version from \cite{GoHu22}. Let us list the differences between \cite[Proposition 4.7]{GoHu22} and  \eqref{eq:mapPoisson}. A first point is to pass from a compactly supported convolution kernel as in \cite{GoHu22} to the heat kernel as in \eqref{eq:mapPoisson}. This is done using Lemma \ref{lem:changekernel} which relies on relatively standard heat/Green kernel estimates.  A second difficulty is  to pass from a quenched and localized estimate in \cite{GoHu22} to an annealed and global one, see \eqref{ncost:x2}. This is obtained appealing to stationarity. The argument here is a bit more delicate than its counterpart in \cite{GoHu22}.

 From this sketch of proof it is clear that the 'only' obstacle to obtain \eqref{eq:nmap} down to $t=n^{-1}$ is the fact that \cite{GHO} is currently only known for constant target measures. An extension of this result to arbitrary measures should also allow to extend our results to the bi-partite case.\\

Let us notice that combining \eqref{eq:nmap}, \eqref{traceformula} and $n\EE\sqa{W^2_2(\mu_n,1)}\sim \log n$ together with Cauchy-Schwarz inequality, it is not hard to obtain
\[
 \abs{n\EE\sqa{W^2_2(\mu_n,1)} - \frac{\log n}{4\pi}} \les (\log n \log\log n)^{\frac{1}{2}}
\]
which gives an alternative proof of the estimate in \cite{AmGl19}.
Similar sub-optimal error terms coming from the application of Cauchy-Schwarz inequality can be seen in \cite[Theorem 1.2]{AmGl19} for example. In order to obtain the sharper  estimate \eqref{eq:ncost} we rely instead on the quasi-orthogonality property
\[
 \abs{n\EE\sqa{\int_{\T_1} (T_n(y)-y-\nabla f_{n,t_n}(y))\cdot \nabla f_{n,t_n}(y)}}\les 1.
\]
This type of estimates, first noticed in  \cite[(3.28)]{GO},  see also \cite[Lemma 1.7]{GHO}, are a central ingredient in the variational approach to the regularity theory for optimal transport maps (see also \cite{KoOt,koch,miuOtt}).

\section{Preliminaries}
In this section we gather a few technical results which follow from relatively standard heat-kernel estimates. To simplify notation and presentation we provide estimates only for moments of order four but similar bounds can be obtained for moments of arbitrary order. As in \cite{AmGl19}, for $t\ge0$ we let
\[
 q_t(x)=\int_t^\infty (p_s(x)-1)ds
\]
so that
\begin{equation}\label{eq:frep}
 \nabla f_{n,t}(y)=\frac{1}{n}\sum_{i=1}^n \nabla q_t(X_i-y).
\end{equation}

\begin{lemma}\label{lem:heat}
Let  $t_n=  n^{-1} \log^3 n$.  Then,
\begin{equation}\label{LinftyHess}\EE[\nor{\nabla^2 f_{n,t_n}}_\infty^4]^{\frac{1}{4}}\les \frac1{\log n}.
\end{equation}
For every $n^{-1}\le s<t<1$ we have
\begin{equation}\label{heat:changetime}
 n\EE\lt[\int_{\T_1} |\nabla f_{n,s}-\nabla f_{n,t}|^4\rt]^{\frac{1}{2}}\les 1+\log\lt(\frac{t}{s}\rt).
\end{equation}

\end{lemma}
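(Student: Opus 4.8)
The plan is to reduce both estimates to elementary concentration bounds for normalized sums of i.i.d.\ terms, fed by standard pointwise and $L^p(\T_1)$ bounds on $q_t$ and its derivatives. Differentiating \eqref{eq:frep} in $y$ gives, for every $y\in\T_1$,
\[
\nabla^2 f_{n,t}(y)=-\frac1n\sum_{i=1}^n\nabla^2 q_t(X_i-y),\qquad \nabla f_{n,s}(y)-\nabla f_{n,t}(y)=\frac1n\sum_{i=1}^n g_{s,t}(X_i-y),
\]
with $g_{s,t}:=\nabla q_s-\nabla q_t=\int_s^t\nabla p_\tau\dv\tau$; since $\int_{\T_1}(p_\tau-1)=0$ we have $\int_{\T_1}q_t=0$, hence $\int_{\T_1}\nabla^m q_t=0$, so these are normalized sums of i.i.d.\ mean-zero terms. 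The deterministic inputs I would use are the heat-kernel bounds $\nor{\nabla^m p_\tau}_\infty\les\tau^{-1-m/2}$ ($0<\tau\le1$) and $\nor{p_\tau-1}_{C^m(\T_1)}\les e^{-c\tau}$ ($\tau\ge1$, spectral gap), which on integrating in $\tau$ give $\nor{\nabla^2 q_t}_\infty\les t^{-1}$ and $\nor{\nabla^3 q_t}_\infty\les t^{-3/2}$ for $0<t<1$; by Plancherel, using $\widehat{q_t}(k)=(4\pi^2|k|^2)^{-1}e^{-4\pi^2|k|^2 t}$ for $k\ne0$, the bound $\nor{\nabla^2 q_t}_{L^2(\T_1)}^2\les\sum_{k\ne0}e^{-8\pi^2|k|^2 t}\les t^{-1}$; and, substituting $\tau=|z|^2/u$ in $|g_{s,t}(z)|\le\int_s^t|\nabla p_\tau(z)|\dv\tau$, the spatial decay $|g_{s,t}(z)|\les\min(s^{-1/2},|z|^{-1})$, whence $\nor{g_{s,t}}_{L^4(\T_1)}^4\les s^{-1}$.

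For \eqref{heat:changetime}: since the $X_i$ are i.i.d.\ uniform, the law of $\frac1n\sum_i g_{s,t}(X_i-y)$ does not depend on $y$, so by Fubini $\EE\sqa{\int_{\T_1}\abs{\nabla f_{n,s}-\nabla f_{n,t}}^4}=\EE\sqa{\abs{\frac1n\sum_{i=1}^n g_{s,t}(X_i)}^4}$. Expanding the fourth moment of a normalized sum of i.i.d.\ mean-zero vectors (only the index configurations in which every term appears at least twice survive) yields the standard inequality
\[
\EE\sqa{\abs{\frac1n\sum_{i=1}^n g_{s,t}(X_i)}^4}\les\frac{\nor{g_{s,t}}_{L^2(\T_1)}^4}{n^2}+\frac{\nor{g_{s,t}}_{L^4(\T_1)}^4}{n^3}.
\]
I would then insert $\nor{g_{s,t}}_{L^2(\T_1)}^2\les 1+\log(t/s)$ — which follows from the Fourier expression above after replacing the sum over $\Z^2\setminus\{0\}$ by $\int_{\R^2}$ (an $O(1)$ error) and evaluating the Frullani-type integral $\int_0^\infty v^{-1}(e^{-vs}-e^{-vt})^2\dv v=\log\frac{(s+t)^2}{4st}\le\log(t/s)$ — together with $\nor{g_{s,t}}_{L^4(\T_1)}^4\les s^{-1}\le n$ (using $s\ge n^{-1}$). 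This gives $\EE\sqa{\int_{\T_1}\abs{\nabla f_{n,s}-\nabla f_{n,t}}^4}\les n^{-2}(1+\log(t/s))^2$, which is \eqref{heat:changetime}.

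For \eqref{LinftyHess}: I would discretize $\T_1$ by a grid $G_h$ of mesh $h=n^{-K}$ with $K$ a large fixed integer, so $|G_h|\les h^{-2}$, and use the deterministic Lipschitz bound $\nor{\nabla^3 f_{n,t_n}}_\infty\le\nor{\nabla^3 q_{t_n}}_\infty\les t_n^{-3/2}$ to reduce $\nor{\nabla^2 f_{n,t_n}}_\infty$ to $\max_{y\in G_h}\abs{\nabla^2 f_{n,t_n}(y)}+Ch\,t_n^{-3/2}$, the last term being $\le n^{-1/2}$ as soon as $K\ge2$. For fixed $y$, $-n\nabla^2 f_{n,t_n}(y)=\sum_i\nabla^2 q_{t_n}(X_i-y)$ is a sum of i.i.d.\ mean-zero matrices bounded (entrywise) by $\nor{\nabla^2 q_{t_n}}_\infty\les t_n^{-1}$ with variance controlled by $\nor{\nabla^2 q_{t_n}}_{L^2(\T_1)}^2\les t_n^{-1}$, so Bernstein's inequality gives $\PP\bra{\abs{n\nabla^2 f_{n,t_n}(y)}>\lambda}\les\exp\bra{-c\min\bra{\lambda^2 t_n/n,\,\lambda t_n}}$. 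A union bound over $G_h$ then shows that $M_h:=\max_{y\in G_h}\abs{n\nabla^2 f_{n,t_n}(y)}$ has sub-exponential tails beyond the scale $\sqrt{(n/t_n)\log n}$, so that integrating the tails gives $\EE\sqa{M_h^4}^{1/4}\les\sqrt{(n/t_n)\log n}$; dividing by $n$ and inserting $t_n=n^{-1}\log^3 n$,
\[
\EE\sqa{\nor{\nabla^2 f_{n,t_n}}_\infty^4}^{1/4}\les\sqrt{\frac{\log n}{n\,t_n}}+n^{-1/2}=\frac1{\log n}+n^{-1/2}\les\frac1{\log n}.
\]

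The main obstacle is exactly this last step in \eqref{LinftyHess}: a union bound based only on a fixed-order moment estimate at each point is far too lossy, since the factor $|G_h|\approx n^{2K}$ would swamp it, so one genuinely needs the exponential Bernstein concentration, which makes passing from a single point to the whole torus cost only an extra $\sqrt{\log n}$ — precisely what turns the sharp one-point bound $\EE\sqa{\abs{\nabla^2 f_{n,t_n}(y)}^4}^{1/4}\les(\log n)^{-3/2}$ into the asserted $(\log n)^{-1}$. The hypothesis $t_n\gg n^{-1}$ enters here as well: it is what makes $h\,t_n^{-3/2}$ (with $h$ a suitable negative power of $n$) negligible against $(\log n)^{-1}$, while keeping $nt_n=\log^3 n\gg\log n$ so that at the relevant scale $\lambda\approx\sqrt{(n/t_n)\log n}\le n$ the active Bernstein exponent is the sub-Gaussian one $\lambda^2 t_n/n\approx\log n$. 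For \eqref{heat:changetime} the only slightly delicate point is getting the sharp $\log(t/s)$ rather than its square in the $L^2$ bound, which is why I would evaluate the Frullani integral exactly rather than estimate it crudely.
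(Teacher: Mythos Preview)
Your proof is correct and follows essentially the same architecture as the paper's, with two minor but worth-noting differences in execution.

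For \eqref{heat:changetime} both arguments reduce by stationarity to a single point, apply the Rosenthal/fourth-moment expansion, and feed in deterministic $L^2$ and $L^4$ bounds on $g_{s,t}=\nabla q_s-\nabla q_t$. The paper gets $\|g_{s,t}\|_{L^4}^4\les s^{-1}$ by the crude triangle inequality and \cite[Corollary 3.13]{AmGl19}, and computes $\|g_{s,t}\|_{L^2}^2=\frac{1}{4\pi}\log(t/s)+O(1)$ via integration by parts, the semigroup identity $\int p_r p_t=p_{r+t}(0)$, and the trace formula; you obtain the same two bounds via a pointwise estimate $|g_{s,t}(z)|\les\min(s^{-1/2},|z|^{-1})$ and a direct Fourier/Frullani computation. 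Both routes are equally short and give the sharp $\log(t/s)$.

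For \eqref{LinftyHess} the paper simply quotes the tail bound \cite[Theorem 3.3]{AmGl19} on $\|\nabla^2 f_{n,t}\|_\infty$ and integrates it. Your Bernstein-at-a-point plus $\epsilon$-net plus Lipschitz argument is exactly how such a tail bound is proved, so you have effectively re-derived that black box rather than cited it; this makes your argument more self-contained at the cost of a few more lines. Your diagnosis of where the hypothesis $t_n=n^{-1}\log^3 n$ is used (so that $nt_n\gg\log n$ keeps the sub-Gaussian Bernstein regime active at the relevant scale, and so that the Lipschitz error $h\,t_n^{-3/2}$ is negligible) is accurate.
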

\begin{proof}
We first prove \eqref{LinftyHess}. For $\xi>0$ define the event $A_\xi^{n,t}=\{\nor{\nabla^2 f_{n,t}}_\infty\leq \xi\}.$ By \cite[Theorem 3.3]{AmGl19},
 there exists a constant $C>0$ such that for any $n\in \N$ and $0<t<1$ we have
\begin{equation}\label{thm:hessian}
\P[\bra{A_\xi^{n,t}}^c]\les \begin{cases} \frac{1}{t^2 \xi^3}e^{-Cnt \xi^2} & \text{ if } 0<\xi \le 1 \\
 \frac{1}{t^2 \xi^3}e^{-Cnt \xi} & \text{ if }  \xi \ge 1.
 \end{cases}
\end{equation}
The estimate for $\xi\ge 1$ is not explicitly contained in the statement of \cite[Theorem 3.3]{AmGl19} but follows by the exact same argument.

Since  for  a non-negative random variable $X$ and $a>0$, $\EE[X^p] \les a^p + \int_a^\infty \xi^{p-1}\P[X\geq \xi] d\xi$, \eqref{thm:hessian} implies that  for  every $1>a>0$,
\begin{equation}\label{toproveinftyhess}
\EE\sqa{\nor{\nabla^2 f_{n,t_n}}_\infty^4}\les a^4+ \int_a^1  \frac{1}{t_n^2} e^{-Cnt_n \xi^2} d\xi + \int_1^\infty  \frac1{t_n^2} e^{-Cnt_n\xi}d\xi.
\end{equation}
Recalling that $t_n= n^{-1} \log^3 n$, the last integral can be estimated as
\begin{equation*}
\frac1{t_n^2}\int_1^\infty   e^{-Cnt_n\xi}d\xi\les \frac{n^2}{\log^9 n } \exp(-C \log^3 n)
 \ll \frac1{\log^4 n}.
\end{equation*}
The other integral can be estimated  by 
\begin{multline*}
  \int_a^1  \frac{1}{t_n^2} e^{-Cnt_n \xi^2} d\xi \les \frac{n^2}{\log^6 n} \int_a^\infty \exp(- C \xi^2\log^3 n ) d\xi \\\stackrel{\xi=  s\log^{-3/2} n}{\les}
  \frac{n^2}{\log^{\frac{15}{2}} n}\int_{a \log^{3/2} n}^\infty e^{- C s^2} ds
 \les  \frac{n^2}{\log^9 n} e^{- C a^2 \log^{3 }n}.
\end{multline*}
Choosing $a=\gamma \log^{-1} n$ with $C\gamma^2>2$ we get
\[
  e^{- C a^2 \log^{3 }n}\le n^{-2}
\]
and thus
\[
 \int_a^1  \frac{1}{t_n^2} e^{-Cnt_n \xi^2} d\xi \les \frac{1}{\log^4 n}.
\]
Plugging this in \eqref{toproveinftyhess} concludes the proof of \eqref{LinftyHess}.\\

We now turn to the proof of \eqref{heat:changetime}. By stationarity we have
\[
 \EE\lt[\int_{\T_1} |\nabla f_{n,s}-\nabla f_{n,t}|^4\rt]=\EE[|\nabla f_{n,s}(0)-\nabla f_{n,t}(0)|^4].
\]
Since $\nabla f_{n,s}(0)= n^{-1}\sum_i \nabla q_s(X_i)$ we have by Rosenthal inequality,
\begin{multline*} \EE[|\nabla f_{n,s}(0)-\nabla f_{n,t}(0)|^4]\les n^{-4} \lt( n \int_{\T_1} |\nabla q_s-\nabla q_t|^4 + \lt(n \int_{\T_1} |\nabla q_s-\nabla q_t|^2 \rt)^2\rt)
\\
=n^{-3}\int_{\T_1} |\nabla q_s-\nabla q_t|^4 +n^{-2}\lt(\int_{\T_1} |\nabla q_s-\nabla q_t|^2 \rt)^2.
\end{multline*}
To estimate the first right-hand side term we recall from \cite[Corollary 3.13]{AmGl19} that for $0<t<1$,
\begin{equation}\label{fourthmomentq}
 \int_{\T_1} |\nabla q_t|^4\les t^{-1}.
\end{equation}
Using triangle inequality we thus conclude that
\begin{equation}\label{heat:1}
 \int_{\T_1} |\nabla q_s-\nabla q_t|^4\les \int_{\T_1} |\nabla q_s|^4+\int_{\T_1} |\nabla q_t|^4 \les  t^{-1}+ s^{-1}\les n.
\end{equation}
For the second right-hand side  term we can argue as in \cite[Proposition 3.11]{AmGl19} using that  $q_s=\int_s^\infty (p_r-1)dr$ and $-\Delta q_s=p_s-1$ to obtain after integration by parts,
\begin{multline*}
\int_{\T_1} |\nabla q_s-\nabla q_t|^2= \int_{\T_1} (q_t-q_s) (p_t-p_s)
= \int_{\T_1}\int_s^t p_r (p_t-p_s) dr dx \\
= \int_s^t\int_{\T_1} (p_r p_t-1) dx dr - \int_s^t\int_{\T_1} (p_r p_s-1) dx dr.
\end{multline*}
Using the semi-group property of the heat kernel together with the trace formula (see e.g. \cite[Theorem 3.7]{AmGl19}) we have
\begin{multline*}
 \int_s^t\int_{\T_1} (p_r p_t-1) dx dr =\int_s^t (p_{r+t}(0)-1) dr
 =\int_{s+t}^{2t} (p_{r}(0)-1) dr=\frac{1}{4\pi} \log\lt(\frac{2t}{s+t}\rt) +O(1).
\end{multline*}
Arguing similarly we have
\[
 \int_s^t\int_{\T_1} (p_r p_s-1) dx dr= \frac{1}{4\pi} \log\lt(\frac{s+t}{2s}\rt) +O(1)
\]
so that
\[
 \EE\sqa{\int_{\T_1} |\nabla q_s-\nabla q_t|^2}= \frac{1}{4\pi}\log\lt(\frac{t}{s}\rt) +O(1).
\]
Combining this with \eqref{heat:1} concludes the proof of \eqref{heat:changetime}.
\end{proof}

\begin{lemma}\label{lem:comp}
For $t\ge r^2_n= \frac1{n}$ we have
\begin{equation}\label{changezerox}
n \EE\sqa{ \frac{1}{|B_{r_n}|}\int_{B_{r_n}} \abs{\nabla f_{n,t}(0)-\nabla f_{n,t}(x) }^2d\mu_n } \les \frac{1}{(nt)^{1/2}} \les 1.
\end{equation}
\end{lemma}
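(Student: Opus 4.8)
The plan is to expand $\nabla f_{n,t}$ through the representation \eqref{eq:frep} and then exploit the independence of the $X_i$. Setting $Z_i(x):=\nabla q_t(X_i)-\nabla q_t(X_i-x)$, one has $\nabla f_{n,t}(0)-\nabla f_{n,t}(x)=\frac1n\sum_{i=1}^n Z_i(x)$, so that the left-hand side of \eqref{changezerox} equals
\[
\frac{n}{|B_{r_n}|}\,\EE\!\left[\frac1n\sum_{j=1}^n \1_{X_j\in B_{r_n}}\,\Big|\frac1n\sum_{i=1}^n Z_i(X_j)\Big|^2\right].
\]
First I would use exchangeability of $(X_i)_{i\ge 1}$ to replace the sum over $j$ by its $j=1$ term, and then condition on $X_1=x$ (which, being uniform on the torus of unit measure, produces an outer integral $\int_{B_{r_n}}\dv x$ over the value of $X_1$). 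Given $X_1=x$, the term $Z_1(x)=\nabla q_t(x)-\nabla q_t(0)=\nabla q_t(x)$ is deterministic (since $q_t$ is even, $\nabla q_t(0)=0$), while $Z_2(x),\dots,Z_n(x)$ are i.i.d.\ and, by periodicity, centered, $\int_{\T_1}\nabla q_t=0$. Expanding the square and using independence, $\EE[\,|\tfrac1n\sum_i Z_i(x)|^2\mid X_1=x\,]=n^{-2}(|\nabla q_t(x)|^2+(n-1)\EE[|Z_2(x)|^2])$; since $|B_{r_n}|=\pi n^{-1}$, this bounds the left-hand side of \eqref{changezerox} by a universal multiple of $\int_{B_{r_n}}|\nabla q_t(x)|^2\dv x+n\int_{B_{r_n}}\EE[|Z_2(x)|^2]\dv x$, so that \eqref{changezerox} follows from
\[
\int_{B_{r_n}}|\nabla q_t(x)|^2\dv x\;+\;n\int_{B_{r_n}}\EE\big[|Z_2(x)|^2\big]\dv x\;\les\;(nt)^{-1/2}.
\]

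For the fluctuation term I would write $\nabla q_t(z)-\nabla q_t(z-x)=\int_0^1\nabla^2 q_t(z-x+sx)\,x\dv s$ and use translation invariance of Lebesgue measure on $\T_1$ to get $\EE[|Z_2(x)|^2]=\int_{\T_1}|\nabla q_t(z)-\nabla q_t(z-x)|^2\dv z\le|x|^2\int_{\T_1}|\nabla^2 q_t|^2$; on the torus $\int_{\T_1}|\nabla^2 q_t|^2=\int_{\T_1}(\Delta q_t)^2=\int_{\T_1}(p_t-1)^2=p_{2t}(0)-1\les t^{-1}$ by the standard on-diagonal heat-kernel bound, and $\int_{B_{r_n}}|x|^2\dv x\les r_n^4=n^{-2}$, so this term is $\les n\cdot n^{-2}t^{-1}=(nt)^{-1}$. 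For the self-interaction term I would simply invoke Hölder's inequality together with \eqref{fourthmomentq}: $\int_{B_{r_n}}|\nabla q_t|^2\le|B_{r_n}|^{1/2}(\int_{\T_1}|\nabla q_t|^4)^{1/2}\les n^{-1/2}t^{-1/2}$. Adding the two bounds and recalling $nt\ge1$ (hence $(nt)^{-1}\le(nt)^{-1/2}\le1$) then gives \eqref{changezerox}.

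The only genuinely delicate point is the probabilistic reduction in the first step: because $\mu_n$ and $f_{n,t}$ are built from the same sample, the inner integral cannot be decoupled from $\nabla f_{n,t}$, and one has to condition on the position of the integration point and separate the diagonal ``self'' contribution $|\nabla q_t(x)|^2$ (the $i=j$ term) from the genuinely fluctuating part. It is exactly this self-term --- estimated only through the $L^4$ bound \eqref{fourthmomentq} rather than through a pointwise bound on $\nabla q_t$ --- that is responsible for the exponent $\tfrac12$ in $(nt)^{-1/2}$; all the other estimates are softer, and the heat-kernel inputs used are standard, in keeping with the opening of this section.
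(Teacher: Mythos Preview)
Your proof is correct and shares the same probabilistic skeleton as the paper's: reduce to the $j=1$ term by exchangeability, expand through \eqref{eq:frep}, and use independence of the $X_i$ to separate the diagonal self-contribution from the off-diagonal fluctuation. The paper expands the three pieces of the square $|\nabla f_{n,t}(0)|^2-2\nabla f_{n,t}(0)\cdot\nabla f_{n,t}(X_1)+|\nabla f_{n,t}(X_1)|^2$ individually, whereas you group into $Z_i(x)=\nabla q_t(X_i)-\nabla q_t(X_i-x)$ and compute the variance of the sum directly; this is only cosmetic. The real difference is in the deterministic heat-kernel inputs. The paper relies throughout on the pointwise bound $\sup_{B_{r_n}}|\nabla q_t|\les t^{-1/2}$ from \cite[Proposition~3.12]{AmGl19}: it controls the self-terms by $(nt)^{-1}$, and after an integration by parts using $-\Delta q_t=p_t-1$ together with the semigroup property it reduces the main term to $\int_{B_{r_n}}(q_{2t}(0)-q_{2t}(x))\le |B_{r_n}|\,r_n\sup_{B_{r_n}}|\nabla q_{2t}|$, which is what saturates the rate $(nt)^{-1/2}$. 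You avoid pointwise gradient bounds entirely: your Bochner identity $\int_{\T_1}|\nabla^2 q_t|^2=\int_{\T_1}(\Delta q_t)^2=p_{2t}(0)-1\les t^{-1}$ gives the sharper $(nt)^{-1}$ for the fluctuation, while the H\"older/$L^4$ estimate \eqref{fourthmomentq} gives $(nt)^{-1/2}$ for the self-term, so in your accounting the bottleneck is the self-term (and could in fact be upgraded to $(nt)^{-1}$ by using the same pointwise bound the paper invokes). Both routes reach the stated rate with comparable effort; yours uses only integral estimates, the paper's makes the underlying cancellation $q_{2t}(0)-q_{2t}(x)$ explicit.
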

\begin{proof}
Put $E=\EE\sqa{ \int_{B_{r_n}} \abs{\nabla f_{n,t}(0)-\nabla f_{n,t}(x) }^2d\mu_n }$ and write $\mu_n=\frac{1}{n}\sum_{i=1}^n \delta_{X_i}$ to get 
\begin{multline*}
 E=\EE\sqa{ \chi_{B_{r_n}}(X_1)|\nabla f_{n,t}(0)-\nabla f_{n,t}(X_1)|^2 }=\EE\sqa{ \chi_{B_{r_n}}(X_1)|\nabla f_{n,t}(0)|^2 }\\
 -2\EE\sqa{ \chi_{B_{r_n}}(X_1)\nabla f_{n,t}(0)\cdot\nabla f_{n,t}(X_1) }+\EE\sqa{ \chi_{B_{r_n}}(X_1)|\nabla f_{n,t}(X_1)|^2 }.
\end{multline*}
We now estimate each term separately using \eqref{eq:frep}. For the first one we have 
\[
 \EE\sqa{ \chi_{B_{r_n}}(X_1)|\nabla f_{n,t}(0)|^2 }=\frac{1}{n^2} \sum_{i,j} \EE\sqa{ \chi_{B_{r_n}}(X_1) \nabla q_t(X_i)\cdot \nabla q_t(X_j)  }.
\]
By independence of the $X_i$ and the fact that $\int_{\T} \nabla q_t(x)=0$ we obtain that for $i\neq j$ the expectation is $0$. Hence, 
\begin{multline*}
 \EE\sqa{ \chi_{B_{r_n}}(X_1)|\nabla f_{n,t}(0)|^2 }=\frac{n-1}{n^2} \EE\sqa{ \chi_{B_{r_n}}(X_1) |\nabla q_t(X_2)|^2  }+ \frac{1}{n^2}\EE\sqa{ \chi_{B_{r_n}}(X_1) |\nabla q_t(X_1)|^2  }\\
 =\frac{n-1}{n^2} |B_{r_n}|\int_{\T} |\nabla q_t|^2 +\frac{1}{n^2} \int_{B_{r_n}} |\nabla q_t|^2.
\end{multline*}
For the last term we have similarly,
\begin{multline*}
 \EE\sqa{ \chi_{B_{r_n}}(X_1)|\nabla f_{n,t}(X_1)|^2 }=\frac{1}{n^2}\sum_{i,j}\EE\sqa{ \chi_{B_{r_n}}(X_1)\nabla q_{t}(X_i-X_1)\cdot \nabla q_t(X_j-X_1) }\\
 =\frac{n-1}{n^2} \EE\sqa{ \chi_{B_{r_n}}(X_1) |\nabla q_t(X_2-X_1)|^2  }+\frac{1}{n^2} \EE\sqa{ \chi_{B_{r_n}}(X_1) |\nabla q_t(0)|^2  }\\
 =\frac{n-1}{n^2} |B_{r_n}|\int_{\T} |\nabla q_t|^2  +\frac{1}{n^2} |B_{r_n}|  |\nabla q_t(0)|^2.
\end{multline*}
Regarding the middle term we have 
\begin{multline*}
 \EE\sqa{ \chi_{B_{r_n}}(X_1)\nabla f_{n,t}(0)\cdot\nabla f_{n,t}(X_1) }=\sum_{i,j} \EE\sqa{ \chi_{B_{r_n}}(X_1)\nabla q_t(X_i)\cdot\nabla q_{t}(X_j-X_1) }\\
 =\frac{n-1}{n^2} \EE\sqa{ \chi_{B_{r_n}}(X_1) \nabla q_t(X_2)\cdot\nabla q_{t}(X_2-X_1)  }+\frac{1}{n^2} \EE\sqa{ \chi_{B_{r_n}}(X_1) \nabla q_t(X_1)\cdot\nabla q_{t}(0)  }\\
 =\frac{n-1}{n^2} \int_{B_{r_n}}\int_{\T} \nabla q_t(z)\cdot \nabla q_t(z-x)+\frac{1}{n^2}\int_{B_{r_n}}\nabla q_t(x)\cdot \nabla q_t(0).
\end{multline*}
Since  by \cite[Proposition 3.12]{AmGl19}
\begin{equation}\label{supqn}
 \sup_{B_{r_n}} |\nabla q_t|\les (r_n +t^{1/2})^{-1}\les t^{-1/2}
\end{equation}
we have 
\[
 \frac{1}{n^2}\int_{B_{r_n}}\nabla q_t(x)\cdot \nabla q_t(0)\les  \frac{|B_{r_n}|}{n^2} t^{-1}
\]
and similarly for the two other terms with prefactor $n^{-2}$. Therefore, for some $C\gg1$,
\begin{multline*}
  n E- C\frac{|B_{r_n}|}{n t}\les \lt(|B_{r_n}|\int_{\T} |\nabla q_t|^2-\int_{B_{r_n}}\int_{\T} \nabla q_t(z)\cdot \nabla q_t(z-x)\rt)\\
  =\int_{B_{r_n}}\int_{\T} \nabla q_t(z)\cdot (\nabla q_t(z)-\nabla q_t(z-x))\\
  =\int_{B_{r_n}}\int_{\T} -\Delta q_t(z)\cdot ( q_t(z)- q_t(z-x))\\
  =\int_{B_{r_n}}\int_{\T} (p_t(z)-1)\cdot ( q_t(z)- q_t(z-x))\\
  =\int_{B_{r_n}} (q_{2t}(0)-q_{2t}(x))\le |B_{r_n}| r_n \sup_{B_{r_n}}|\nabla q_{2t}|.
\end{multline*}
Here we used that $-\Delta q_t=p_t-1$ together with the semi-group property of $p_t$. Using \eqref{supqn} again  we conclude the proof of \eqref{changezerox}.
\end{proof}
\begin{remark}
 For $t\ge t_n=n^{-1}\log^3 n$, the proof of \eqref{changezerox} can be significantly simplified with the help of the Hessian bounds \eqref{LinftyHess}.
\end{remark}

Finally, in order to translate the results from \cite{GHO,GoHu22} to the setting of \cite{AmStTr16,AmGl19,AmGlaTre} we will need to be able to switch from convolutions against compactly supported kernels to convolutions against the heat kernel.
\begin{lemma}\label{lem:changekernel}
 Let $\eta\in C^\infty_c(B_1)$ be a smooth convolution kernel. For $r>0$, set  $\eta_r=r^{-2}\eta(\cdot/r)$ and let then $\phi_n^r$ the mean-zero solution of
 \[
  -\Delta  \phi_n^r=\eta_r\ast(\mu_n-1).
 \]
For every $t\ge n^{-1}$ we have
\begin{equation}\label{conclusionchangekernel}
 n \EE[|\nabla f_{n,t}(0)-\nabla \phi^{\sqrt{t}}_n(0)|^4]^{\frac{1}{2}}\les 1.
\end{equation}
\end{lemma}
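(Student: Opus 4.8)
The plan is to reduce \eqref{conclusionchangekernel} to two deterministic $L^p$-bounds on a single convolution kernel, to deduce the $L^4$-bound from the $L^2$-bound by interpolation with an elementary $L^\infty$-bound, and thereby to bring everything down to one trace-formula computation of the type already carried out for \eqref{heat:changetime}.

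Let $G$ be the Green function of $-\Delta$ on $\T_1$ with $\int_{\T_1}G=0$, so that $-\Delta G=\delta_0-1$, $G(x)=-\frac1{2\pi}\log|x|+O(1)$ near $0$ and $|\nabla G(x)|\les|x|^{-1}$. Since $f_{n,t}=p_t\ast f_n$ with $-\Delta f_n=\mu_n-1$, and since up to an additive constant $\phi_n^r=\eta_r\ast f_n$, we have
\[
\nabla f_{n,t}-\nabla\phi_n^{\sqrt t}=(p_t-\eta_{\sqrt t})\ast\nabla f_n=\Psi_t\ast(\mu_n-1),\qquad\Psi_t:=(p_t-\eta_{\sqrt t})\ast\nabla G .
\]
Evaluating at $0$ and using $\int_{\T_1}\nabla G=0$ (hence also $\int_{\T_1}\Psi_t=0$) together with the fact that the points $-X_i$ are again i.i.d.\ uniform on $\T_1$, we get $\nabla f_{n,t}(0)-\nabla\phi_n^{\sqrt t}(0)=\frac1n\sum_{i=1}^n\Psi_t(-X_i)$ with i.i.d.\ centered summands, so by Rosenthal's inequality exactly as in the proof of \eqref{heat:changetime},
\[
\EE\big[|\nabla f_{n,t}(0)-\nabla\phi_n^{\sqrt t}(0)|^4\big]\les n^{-3}\int_{\T_1}|\Psi_t|^4+n^{-2}\Big(\int_{\T_1}|\Psi_t|^2\Big)^2 .
\]
Consequently $n\,\EE[|\nabla f_{n,t}(0)-\nabla\phi_n^{\sqrt t}(0)|^4]^{1/2}\les n^{-1/2}\big(\int_{\T_1}|\Psi_t|^4\big)^{1/2}+\int_{\T_1}|\Psi_t|^2$, so that \eqref{conclusionchangekernel} follows once we establish the deterministic bounds $\int_{\T_1}|\Psi_t|^2\les1$ and $\int_{\T_1}|\Psi_t|^4\les t^{-1}$ (recall $t\ge n^{-1}$).

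For the $L^4$-bound I would simply interpolate, $\int_{\T_1}|\Psi_t|^4\le\|\Psi_t\|_{L^\infty(\T_1)}^2\int_{\T_1}|\Psi_t|^2$, and use the crude bound $\|\Psi_t\|_{L^\infty(\T_1)}\les t^{-1/2}$: indeed $\nabla q_t=p_t\ast\nabla G$ satisfies $\|\nabla q_t\|_{L^\infty(\T_1)}\les t^{-1/2}$ by \cite[Proposition 3.12]{AmGl19} (the global form of \eqref{supqn}), while $|\eta_{\sqrt t}\ast\nabla G(x)|\les\int|\eta_{\sqrt t}(z)|\,|x-z|^{-1}\,dz\les(|x|+\sqrt t)^{-1}\les t^{-1/2}$ since $\eta_{\sqrt t}$ is $L^1$-bounded and supported in $B_{\sqrt t}$. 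Hence $\int_{\T_1}|\Psi_t|^4\les t^{-1}\int_{\T_1}|\Psi_t|^2$, and the whole lemma is reduced to proving $\int_{\T_1}|\Psi_t|^2\les1$.

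This last bound is the heart of the matter, and I would prove it in the spirit of \eqref{heat:changetime}. Writing $\Psi_t=\nabla\big((p_t-\eta_{\sqrt t})\ast G\big)$, an integration by parts using $-\Delta G=\delta_0-1$ and $\int_{\T_1}(p_t-\eta_{\sqrt t})=0$ gives $\int_{\T_1}|\Psi_t|^2=\int_{\T_1}\big((p_t-\eta_{\sqrt t})\ast G\big)\,(p_t-\eta_{\sqrt t})$, which expands into the four terms $\int_{\T_1}(p_t\ast G)\,p_t$, $-\int_{\T_1}(p_t\ast G)\,\eta_{\sqrt t}$, $-\int_{\T_1}(\eta_{\sqrt t}\ast G)\,p_t$ and $\int_{\T_1}(\eta_{\sqrt t}\ast G)\,\eta_{\sqrt t}$. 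Each of them equals $\frac1{4\pi}|\log t|+O(1)$: the first one is $q_{2t}(0)$, computed via the trace formula (\cite[Theorem 3.7]{AmGl19}), and the other three are of the form $\int_{\T_1}G\cdot K=-\frac1{2\pi}\log\sqrt t+O(1)$ where $K$ is a unit-mass kernel concentrated at scale $\sqrt t$, by the logarithmic behaviour of $G$ at the origin. Hence the four contributions cancel in pairs and $\int_{\T_1}|\Psi_t|^2=O(1)$. The only genuine subtlety — and the main technical point — is that $p_t$ is not compactly supported, so in each of these computations the Gaussian tails of $p_t$ must be controlled against the logarithmic singularity of $G$; this is routine given the standard heat-kernel bounds $\int_{\T_1}|z|\,p_t(z)\,dz\les\sqrt t$ and $p_t(z)\les t^{-1}e^{-c|z|^2/t}$. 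Combining $\int_{\T_1}|\Psi_t|^2\les1$ with $\|\Psi_t\|_{L^\infty(\T_1)}\les t^{-1/2}$ and the Rosenthal estimate above then yields \eqref{conclusionchangekernel}.
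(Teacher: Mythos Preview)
Your argument is correct and follows a genuinely different route from the paper's for the two deterministic estimates, while sharing the same Rosenthal-inequality opening. For the $L^4$ bound the paper simply splits by the triangle inequality into $\int_{\T_1}|\nabla q_t|^4+\int_{\T_1}|\nabla(\eta_{\sqrt t}\ast q_0)|^4$ and bounds each by $t^{-1}$ directly; you instead interpolate $\|\Psi_t\|_{L^4}^4\le\|\Psi_t\|_{L^\infty}^2\|\Psi_t\|_{L^2}^2$, which has the pleasant feature of reducing everything to the single $L^2$ estimate once the elementary $\|\Psi_t\|_{L^\infty}\lesssim t^{-1/2}$ is in hand. For the $L^2$ bound the paper works in real space, splitting $\T_1$ into $B_{3\sqrt t}$ (where each term is controlled pointwise) and its complement (where both kernels are compared to $\nabla q_0$ via heat-kernel tail bounds and a Taylor estimate on the Green function); you instead integrate by parts and expand $\int_{\T_1}((p_t-\eta_{\sqrt t})\ast G)(p_t-\eta_{\sqrt t})$ into four terms, each equal to $\tfrac{1}{4\pi}|\log t|+O(1)$, so that the logarithms cancel algebraically. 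Your approach is closer in spirit to the computation behind \eqref{heat:changetime} and is more economical, at the cost of having to justify the trace-formula asymptotics for the mixed kernels $p_t\ast\eta_{\sqrt t}$ and $\eta_{\sqrt t}\ast\check\eta_{\sqrt t}$ (your ``routine'' step), whereas the paper's real-space decomposition avoids any such computation but needs separate pointwise control in each region.
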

\begin{proof}
 We start by noting that $\nabla \phi^{\sqrt{t}}_n(0)=n^{-1}\sum_i (\eta_{\sqrt{t}}\ast \nabla q_0)(X_i)$. Since  $\nabla f_{n,t}(0)=n^{-1}\sum_i \nabla q_t(X_i)$ we may apply as above Rosenthal inequality to obtain
\begin{multline*}
 \EE[|\nabla f_{n,t}(0)-\nabla \phi^{\sqrt{t}}_n(0)|^4]\\
 \les n^{-4}\lt(\sum_i \EE[|\nabla q_t(X_i)-\nabla (\eta_{\sqrt{t}}\ast q_0)(X_i)|^4] + (\sum_i \EE[|\nabla q_t(X_i)-\nabla (\eta_{\sqrt{t}}\ast q_0)(X_i)|^2])^2\rt)\\
 =n^{-4}\lt(n \int_{\T_1}|\nabla q_t-\nabla (\eta_{\sqrt{t}}\ast q_0)|^4 + (n \int_{\T_1}|\nabla q_t-\nabla (\eta_{\sqrt{t}}\ast q_0)|^2)^2\rt)\\
 \les n^{-3} \lt(\int_{\T_1}|\nabla q_t|^4+\int_{\T_1}|\nabla (\eta_{\sqrt{t}}\ast q_0)|^4\rt) + n^{-2} \lt(\int_{\T_1}|\nabla q_t-\nabla (\eta_{\sqrt{t}}\ast q_0)|^2\rt)^2.
\end{multline*}
The first right-hand side term is estimated by \eqref{fourthmomentq}. Arguing exactly as in the proof of \cite[(3.5)]{GoHu22}, we may similarly estimate the second right-hand side term by
\begin{equation*}\label{fourthmomenteta}
 \int_{\T_1} |\nabla (\eta_{\sqrt{t}}\ast q_0)|^4\les t^{-1}.
\end{equation*}
In order to conclude the proof of \eqref{conclusionchangekernel} we are left with proving
 \begin{equation}\label{mainestim}
  \int_{\T_1} |\nabla q_t- \nabla (\eta_{\sqrt{t}}\ast q_0)|^2 \les 1.
 \end{equation}
To this aim we write
\begin{equation}\label{mainestim:break}
 \int_{\T_1} |\nabla q_t- \nabla (\eta_{\sqrt{t}}\ast q_0)|^2 \les\int_{ B_{3 \sqrt{t}}} |\nabla q_t|^2 + \int_{B_{3\sqrt{t}}} |\nabla (\eta_{\sqrt{t}}\ast q_0)|^2 +\int_{\T_1\backslash B_{3\sqrt{t}}} |\nabla q_t- \nabla (\eta_{\sqrt{t}}\ast q_0)|^2
\end{equation}
For the first right-hand side term we notice that by \cite[Proposition 3.12]{AmGl19} in $B_{3\sqrt{t}}$ we have
\[
|\nabla q_t|\les \frac{1}{\sqrt{t}}
\]
so that
\begin{equation}\label{mainestim:1}
\int_{ B_{3 \sqrt{t}}} |\nabla q_t|^2\les \int_{ B_{3 \sqrt{t}}} \frac{1}{t}\les 1.
\end{equation}
The second right-hand side term in \eqref{mainestim:break} is treated similarly. Indeed, arguing as in \cite[(3.7)]{GoHu22} we get  that in $B_{3\sqrt{t}}$,
\[
 |\nabla (\eta_{\sqrt{t}}\ast q_0)|\les \frac{1}{\sqrt{t}}
\]
so that
\begin{equation}\label{mainestim:2}
 \int_{B_{3\sqrt{t}}} |\nabla (\eta_{\sqrt{t}}\ast q_0)|^2\les 1.
 \end{equation}
We finally turn to the last right-hand side term in \eqref{mainestim:break}. We further decompose it as
\begin{equation}\label{mainestim:last}
 \int_{\T_1\backslash B_{3\sqrt{t}}} |\nabla q_t- \nabla (\eta_{\sqrt{t}}\ast q_0)|^2\les \int_{\T_1\backslash B_{3\sqrt{t}}} |\nabla q_t- \nabla q_0|^2+\int_{\T_1\backslash B_{3\sqrt{t}}} | \nabla (\eta_{\sqrt{t}}\ast q_0)-\nabla q_0|^2.
\end{equation}
For the first right-hand side term we have by definition of $q_t$,
\[
 \nabla q_t- \nabla q_0=-\int_0^t \nabla p_s ds.
\]
Moreover, by \cite[Theorem 3.8\&3.9]{AmGl19}, we have in $\T_1\backslash B_{3\sqrt{t}}$
\[
 |\nabla p_s|^2\les \frac{|x|^2}{s^4} \exp(-c \frac{|x|^2}{s}).
\]
Using Cauchy-Schwarz, we find
\begin{multline*}
 \int_{\T_1\backslash B_{3\sqrt{t}}} |\nabla q_t- \nabla q_0|^2\les t\int_0^t \int_{\T_1\backslash B_{3\sqrt{t}}} |\nabla p_s|^2\les t\int_0^t \int_{\T_1\backslash B_{3\sqrt{t}}} \frac{|x|^2}{s^4} \exp(-c \frac{|x|^2}{s})\\
 \stackrel{x=\sqrt{s} y}{\les}t\int_0^t \int_{3\sqrt{t/s}}^\infty \frac{r^3}{s^2} \exp(- c r^2) dr ds.
\end{multline*}
Using that  for $A\ge 3$,
\[
 \int_A^\infty r^3 \exp(- c r^2) \les A^2 \exp(-c A^2)
\]
we find
\[
 \int_{\T_1\backslash B_{3\sqrt{t}}} |\nabla q_t- \nabla q_0|^2\les t^2\int_0^t s^{-3} \exp(-c \frac{t}{s}) ds\stackrel{s=t u}{=} \int_0^1 u^{-3}\exp(-\frac{1}{u}) du\les 1.
\]
We now estimate the second right-hand side term in \eqref{mainestim:last}. For $x\in \T_1\backslash B_{3\sqrt{t}}$ and $y\in B_{\sqrt{t}}$ we have arguing exactly as in \cite[Lemma 3.1]{GoHu22}
\[
 |\nabla q_0(x-y)-\nabla q_0(x)|\les \frac{|y|}{|x|^2}.
\]
Therefore,
\begin{multline*}
 \int_{\T_1\backslash B_{3\sqrt{t}}} \lt| \nabla (\eta_{\sqrt{t}}\ast q_0)-\nabla q_0\rt|^2=\int_{\T_1\backslash B_{3\sqrt{t}}} \lt|\int_{B_{\sqrt{t}}} \eta_{\sqrt{t}}(y) (\nabla q_0(x-y)-\nabla q_0(x))dy \rt|^2 dx\\
 \le \int_{\T_1\backslash B_{3\sqrt{t}}} \lt(\int_{B_{\sqrt{t}}} \eta_{\sqrt{t}}(y)  |y|\rt)^2 |x|^{-4} dx\le t \int_{\T_1\backslash B_{3\sqrt{t}}}  |x|^{-4} \les 1.
\end{multline*}
We conclude that
\begin{equation}\label{mainestim:3}
 \int_{\T_1\backslash B_{3\sqrt{t}}} |\nabla q_t- \nabla (\eta_{\sqrt{t}}\ast q_0)|^2\les 1.
\end{equation}
Injecting \eqref{mainestim:1}, \eqref{mainestim:2} and \eqref{mainestim:3} in \eqref{mainestim:break} we obtain the desired \eqref{mainestim}.
\end{proof}

We recall that from the concentration properties of $W^2_2(\mu_n,1)$ (see \cite{AmStTr16} or \cite[Theorem 4.5 \& Remark 4.6]{GoHu22}) we have the following moment bound:

\begin{lemma}\label{lem:Wmoments}
We have
\begin{equation}\label{eq:Wbound}
n\EE\sqa{(W^2_2(\mu_n,1))^2}^{\frac{1}{2}}\les \log n.
\end{equation}
\end{lemma}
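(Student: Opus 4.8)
The plan is to write $\EE[(W^2_2(\mu_n,1))^2] = (\EE[W^2_2(\mu_n,1)])^2 + \mathrm{Var}(W^2_2(\mu_n,1))$ and to bound the two terms separately. The first one already has the right order: since $n\EE[W^2_2(\mu_n,1)]\sim\log n$ (recalled in the introduction, see \cite{AKT84}), we have $(\EE[W^2_2(\mu_n,1)])^2\les n^{-2}\log^2 n$. So the only real task is a variance estimate, and it is enough to prove the (non-optimal but sufficient) bound $\mathrm{Var}(W^2_2(\mu_n,1))\les n^{-2}\log n$: being of smaller order than $n^{-2}\log^2 n$, it gives $\EE[(W^2_2(\mu_n,1))^2]\les n^{-2}\log^2 n$, which is \eqref{eq:Wbound}.

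For the variance I would use the Poincar\'e (spectral gap) inequality. The flat torus $(\T_1,\leb)$ satisfies a Poincar\'e inequality with a universal constant $C_P$ (the spectral gap of $-\Delta$ on $(\R/\Z)^2$ equals $4\pi^2$), and by tensorisation the product $(\T_1^n,\leb^{\otimes n})$ satisfies it with the same constant. We apply it to the function $F(x_1,\dots,x_n):=W^2_2\big(\tfrac1n\sum_{i=1}^n\delta_{x_i},1\big)$, which is Lipschitz on $\T_1^n$: coupling the empirical measures point by point gives $W_2\big(\tfrac1n\sum_i\delta_{x_i},\tfrac1n\sum_i\delta_{\tilde x_i}\big)^2\le\tfrac1n\sum_i|x_i-\tilde x_i|^2$, and since $W_2(\cdot,1)\le\mathrm{diam}(\T_1)$ this yields $|F(x)-F(\tilde x)|\les n^{-1/2}|x-\tilde x|$, so $F\in W^{1,2}(\T_1^n)$ and $\mathrm{Var}(F)\le C_P\,\EE[|\nabla F|^2]$.

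The heart of the matter is a self-bounding estimate $|\nabla F|^2\le\frac4n F$ almost everywhere. Writing $\T_1=\bigsqcup_i\Omega_i$ with $|\Omega_i|=n^{-1}$ for the partition induced by the optimal transport from $\leb$ to $\tfrac1n\sum_i\delta_{x_i}$ (so that $F=\sum_i\int_{\Omega_i}\dist_{\T_1}(x_i,y)^2\dv y$), the envelope theorem --- the partition being stationary at the optimum, this is the standard semi-discrete computation --- gives $\partial_{x_i}F=2\int_{\Omega_i}(x_i\ominus y)\dv y$, where $x_i\ominus y$ denotes the minimal-norm lift of the difference. Cauchy--Schwarz then yields $|\partial_{x_i}F|^2\le 4|\Omega_i|\int_{\Omega_i}|x_i\ominus y|^2\dv y=\frac4n\int_{\Omega_i}\dist_{\T_1}(x_i,y)^2\dv y$, and summing over $i$ gives $|\nabla F|^2\le\frac4n F$. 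Plugging this into the Poincar\'e inequality, $\mathrm{Var}(F)\le C_P\,\EE[|\nabla F|^2]\le\frac{4C_P}{n}\EE[F]\les n^{-2}\log n$, as desired.

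The main obstacle is to make the differentiation of $F$ rigorous: $F$ is only Lipschitz, the optimal transport partition need not be unique for every configuration, and on the torus one must discard the cut locus (a null set) when differentiating $\dist_{\T_1}^2$. All of this is routine in the semi-discrete optimal transport setting --- one differentiates at the full-measure set of configurations for which the Laguerre tessellation is unique, or regularises --- and it is essentially the content of the concentration estimates of \cite{AmStTr16} and \cite[Theorem 4.5 \& Remark 4.6]{GoHu22}, which one may alternatively invoke directly. Combining the resulting variance bound with $\EE[W^2_2(\mu_n,1)]\les n^{-1}\log n$ as above yields \eqref{eq:Wbound}.
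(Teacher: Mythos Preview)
Your argument is correct. Note, however, that the paper itself does \emph{not} give a proof of this lemma: it merely records the bound as a consequence of the concentration estimates in \cite{AmStTr16} and \cite[Theorem~4.5 \& Remark~4.6]{GoHu22}. What you have written is essentially an unpacking of those references --- the Poincar\'e inequality on the product torus together with the self-bounding estimate $|\nabla F|^2\le \tfrac{4}{n}F$ is precisely the mechanism behind those concentration results --- and you acknowledge as much in your final paragraph. So your approach and the paper's (deferred) approach coincide; you have simply made explicit what the paper outsources. The technical caveats you flag (a.e.\ differentiability, uniqueness of the Laguerre cells, cut locus) are genuine but, as you say, routine in the semi-discrete setting and handled in the cited works.
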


\section{Proof of the main results}
In this section we prove Theorem \ref{theo:mainmap} and then Theorem \ref{theo:maincost}.

\begin{proof}[Proof of Theorem \ref{theo:mainmap}]
We start with \eqref{eq:mapPoisson}. Let $t\ge r_n^2$. As in \cite{GoHu22}, we first rely on  stationarity to infer that
\[
 \EE\sqa{\int_{\T_1\times \T_1} \abs{x-y-\nabla f_{n,t}(x)}^2d\pi_n } =\EE\sqa{ \frac{1}{|B_{r_n}|}\int_{B_{r_n}\times \T_1} \abs{x-y-\nabla f_{n,t_n}(x)}^2d\pi_n }.
\]
Let $\phi_n^r$ be defined as in Lemma \ref{lem:changekernel}. We now claim that in order to prove \eqref{eq:mapPoisson}, it is enough to prove
\begin{equation}\label{ncost:x2}
 n\EE\sqa{ \frac{1}{|B_{r_n}|}\int_{B_{r_n}\times \T_1} \abs{x-y-\nabla \phi_{n}^{r_n}(0)}^2d\pi_n }\les 1.
\end{equation}
To this aim we use for $(x,y)\in B_{r_n}\times \T_1$  the triangle inequality in the form
\begin{multline*}
 \abs{x-y-\nabla f_{n,t}(x)}^2\les \abs{x-y-\nabla \phi_n^{r_n}(0)}^2\\
 + \abs{\nabla \phi_n^{r_n}(0)-\nabla f_{n,r^2_n}(0)}^2+ \abs{\nabla f_{n,r^2_n}(0)-\nabla f_{n,t}(0)}^2+\abs{\nabla f_{n,t}(0)-\nabla f_{n,t}(x)}^2.
\end{multline*}
In order to prove the claim we need to show that the contributions coming from the last three terms on the right-hand side are controlled. Observe, that $n\mu_n(B_{r_n})$ is a Binomial random variable with all moments of order 1. Hence, we can estimate
\begin{multline*}
 n\EE\sqa{ \frac{1}{|B_{r_n}|}\int_{B_{r_n}\times \T_1} \abs{\nabla \phi_n^{r_n}(0)-\nabla f_{n,r^2_n}(0)}^2d\pi_n }\\
 =\frac{n}{|B_{r_n}|}\EE\sqa{ \mu_n(B_{r_n}) \abs{\nabla \phi_n^{r_n}(0)-\nabla f_{n,r^2_n}(0)}^2}\\
 \le \frac{n}{|B_{r_n}|}\EE\sqa{ (\mu_n(B_{r_n}))^2}^{\frac{1}{2}}\EE\sqa{ \abs{\nabla \phi_n^{r_n}(0)-\nabla f_{n,r^2_n}(0)}^4}^{\frac{1}{2}}\stackrel{\eqref{conclusionchangekernel}}{\les} 1.
\end{multline*}
Second,
\begin{multline*}
 n\EE\sqa{ \frac{1}{|B_{r_n}|}\int_{B_{r_n}\times \T_1} \abs{\nabla f_{n,r^2_n}(0)-\nabla f_{n,t}(0)}^2d\pi_n }\\
 \le \frac{n}{|B_{r_n}|}\EE\sqa{ (\mu_n(B_{r_n}))^2}^{\frac{1}{2}}\EE\sqa{ \abs{\nabla f_{n,r^2_n}(0)-\nabla f_{n,t}(0)}^4}^{\frac{1}{2}}\stackrel{\eqref{heat:changetime}}{\les}  \lt(1+\log\lt(\frac{t}{r^2_n}\rt)\rt).
\end{multline*}
Finally, by \eqref{changezerox} of Lemma \ref{lem:comp} we have
\begin{multline*}
 n\EE\sqa{ \frac{1}{|B_{r_n}|}\int_{B_{r_n}\times \T_1} \abs{\nabla f_{n,t_n}(0)-\nabla f_{n,t_n}(x) }^2d\pi_n }\\=n\EE\sqa{ \frac{1}{|B_{r_n}|}\int_{B_{r_n}} \abs{\nabla f_{n,t_n}(0)-\nabla f_{n,t_n}(x) }^2d\mu_n }
 \les 1.
\end{multline*}
We now establish \eqref{ncost:x2}. After rescaling and changing $\phi_n^{r_n}$ into $-\phi_n^{r_n}$, \cite[Proposition 4.6]{GoHu22} yields the existence of a random radius $r_{\ast, n}\ge r_n$  with moments of every order i.e. for every $p\ge 1$,
\begin{equation}\label{momentr}
 \EE\sqa{ \lt(\frac{r_{*,n}}{r_n}\rt)^p}\les_p 1
\end{equation}
such that
\[
 \sup\{|x-y- \nabla \phi_n^{r_{\ast,n}}(0)|\ : \ (x,y)\in \mathsf{Spt} \, \pi_n \cap (B_{r_n}\times \T_1)\}\les r_{\ast,n}.
\]
Moreover, combining \cite[Lemma  4.3 \& Theorem 4.5]{GoHu22} together with \cite[Lemma 2.10]{GHO} we may further assume that $r_{\ast,n}$ is such that for some $\alpha\in (0,1)$,
\begin{equation*}\label{delphi}
 |\nabla \phi_n^{r_{\ast,n}}(0)-\nabla \phi_n^{r_{n}}(0)|\les r_{\ast,n} \lt(\frac{r_{\ast,n}}{r_n}\rt)^{2+\alpha}.
\end{equation*}
We can thus estimate using triangle inequality,
\begin{multline*}
 n\EE\sqa{ \frac{1}{|B_{r_n}|}\int_{B_{r_n}\times \T_1} \abs{x-y-\nabla \phi_{n}^{r_n}(0)}^2d\pi_n }\\\les n \EE\sqa{ \frac{1}{|B_{r_n}|}\int_{B_{r_n}\times \T_1} \abs{x-y-\nabla \phi_{n}^{r_{\ast,n}}(0)}^2d\pi_n }+n \EE\sqa{ \frac{\mu(B_{r_n})}{|B_{r_n}|} \abs{\nabla \phi_{n}^{r_{\ast,n}}(0)-\nabla \phi_{n}^{r_n}(0)}^2 }\\
 \les n \EE\sqa{r_{*,n}^4}^{\frac{1}{2}}+ n \EE\sqa{r_{\ast,n}^4\lt(\frac{r_{\ast,n}}{r_n}\rt)^{4(2+\alpha)}}^{\frac{1}{2}}
 \stackrel{\eqref{momentr}}{\les} 1.
\end{multline*}
This concludes the proof of \eqref{ncost:x2} and, hence, of \eqref{eq:mapPoisson}.

To show \eqref{eq:nmap}
 we use the triangle inequality to estimate for $t\ge t_n$,
\begin{multline*}
n\EE\sqa{\int_{\T_1} \abs{T_n(y)-y-\nabla f_{n,t}(y)}^2}\\\les n\EE\sqa{\int_{\T_1} \abs{\nabla f_{n,t}-\nabla f_{n,t_n}}^2}+n\EE\sqa{\int_{\T_1} \abs{T_n(y)-y-\nabla f_{n,t_n}(y)}^2}.
\end{multline*}
Using H\"older inequality and \eqref{heat:changetime} we see that it is enough to prove \eqref{eq:nmap} for $t=t_n$. We then use triangle inequality again to write
\begin{multline*}
n\EE\sqa{\int_{\T_1} \abs{T_n(y)-y-\nabla f_{n,t_n}(y)}^2}=n\EE\sqa{\int_{\T_1\times \T_1} \abs{x-y-\nabla f_{n,t_n}(y)}^2 d\pi_n}\\
 \les n\EE\sqa{\int_{\T_1\times \T_1} \abs{x-y-\nabla f_{n,t_n}(x)}^2d\pi_n } + n\EE\sqa{\int_{\T_1\times \T_1} \abs{\nabla f_{n,t_n}(x)-\nabla f_{n,t_n}(y)}^2d\pi_n }.
\end{multline*}
Since the second right-hand side term is estimated as 
\begin{multline*}
 n\EE\sqa{\int_{\T_1\times \T_1} \abs{\nabla f_{n,t_n}(x)-\nabla f_{n,t_n}(y)}^2d\pi_n }\les  n\EE[\nor{\nabla^2 f_{n,t_n}}_\infty^4]^{\frac{1}{2}} \EE[(W^2_2(\mu_n,1))^2]^{\frac{1}{2}}\\
 \stackrel{\eqref{LinftyHess}\&\eqref{eq:Wbound}}{\les} \frac{\log n}{\log^2 n}=\frac1{\log n},
\end{multline*}
 we conclude by \eqref{eq:mapPoisson}.
 \end{proof}

We finally prove Theorem \ref{theo:maincost}.
\begin{proof}[Proof of Theorem \ref{theo:maincost}]
  We start by writing
 \[
  |T_n(y)-y|^2= |T_n(y)-y-\nabla f_{n,t_n}(y)|^2 + |\nabla f_{n,t_n}(y)|^2 +2 (T_n(y)-y-\nabla f_{n,t_n}(y))\cdot \nabla f_{n,t_n}(y).
 \]
After integration and using \eqref{eq:nmap} with $t=t_n$, we thus get
\begin{multline*}
 \abs{n\EE\sqa{\int_{\T_1} |T_n(y)-y|^2} - \frac{\log n}{4\pi}} \les \log \log n +\abs{n\EE\sqa{\int_{\T_1} |\nabla f_{n,t_n}(y)|^2} - \frac{\log n}{4\pi}}\\
 + \abs{n\EE\sqa{\int_{\T_1} (T_n(y)-y-\nabla f_{n,t_n}(y))\cdot \nabla f_{n,t_n}(y)}}.
\end{multline*}
Since by the trace formula, see \eqref{traceformula}
\[
 \abs{n\EE\sqa{\int_{\T_1} |\nabla f_{n,t_n}(y)|^2} - \frac{\log n}{4\pi}}\les \log \log n,
\]
we are left with the proof of the quasi-orthogonality property
\begin{equation}\label{quasi-orth}
 \abs{n\EE\sqa{\int_{\T_1} (T_n(y)-y-\nabla f_{n,t_n}(y))\cdot \nabla f_{n,t_n}(y)}}\les 1.
\end{equation}
For this we  first split the left-hand side as
\begin{multline}\label{splitortho}
  n\EE\sqa{\int_{\T_1} (T_n(y)-y-\nabla f_{n,t_n}(y))\cdot \nabla f_{n,t_n}(y)}=n\EE\sqa{\int_{\T_1} (T_n(y)-y)\cdot \nabla f_{n,t_n}(y)}\\
 -n\EE\sqa{\int_{\T_1} | \nabla f_{n,t_n}(y)|^2}.
\end{multline}
To estimate the first term we argue in the spirit of \cite{KoOt} and  introduce the following notation: for $y\in \T_1$ let $X_s=X_s(y)$ be the constant speed geodesic with $X_0=y$ and $X_1=T_n(y)$. We thus have
\begin{multline*}
 \int_{\T_1} (T_n(y)-y)\cdot \nabla f_{n,t_n}(y)=\int_{\T_1}\int_0^1  \dot{X}_s\cdot \nabla f_{n,t_n}(X_0)\\
 =\int_{\T_1}\int_0^1  \dot{X}_s\cdot \nabla f_{n,t_n}(X_s)+\int_{\T_1}\int_0^1  \dot{X}_s\cdot (\nabla f_{n,t_n}(X_0)-\nabla f_{n,t_n}(X_s)).
\end{multline*}
For the second term we can estimate
\begin{multline}\label{eq:lastsuboptim}
 n\EE\sqa{\abs{\int_{\T_1}\int_0^1  \dot{X}_s\cdot (\nabla f_{n,t_n}(X_0)-\nabla f_{n,t_n}(X_s))}}\les n\EE\sqa{\nor{\nabla^2 f_{n,t_n}}_\infty W^2_2(\mu_n,1)}\\
 \stackrel{\eqref{LinftyHess}\&\eqref{eq:Wbound}}{\les} 1.
\end{multline}
For the first term we use that $\dot{X}_s\cdot \nabla f_{n,t_n}(X_s)= \frac{d}{ds} [f_{n,t_n}(X_s)]$ to write (recall that $X_1=T_n(y)$ and $X_0=y$)
\begin{multline*}
 \int_{\T_1}\int_0^1  \dot{X}_s\cdot \nabla f_{n,t_n}(X_s)=\int_{\T_1}(f_{n,t_n}(X_1)-f_{n,t_n}(X_0)) \\
 =\int_{\T_1} f_{n,t_n} d(\mu_n-1)= \int_{\T_1} f_{n,t_n} (-\Delta f_{n,0})=\int_{\T_1} \nabla f_{n,t_n}\cdot \nabla f_{n,0}.
 \end{multline*}
 Using the semi-group property of the heat kernel we get
 \[
  \int_{\T_1} \nabla f_{n,t_n}\cdot \nabla f_{n,0}=\int_{\T_1} |\nabla f_{n,\frac{t_n}{2}}|^2.
 \]
We thus conclude that
\[
 \abs{n\EE\sqa{\int_{\T_1} (T_n(y)-y)\cdot \nabla f_{n,t_n}(y)}- n\EE\sqa{\int_{\T_1} |\nabla f_{n,\frac{t_n}{2}}|^2}}\les 1.
\]
Plugging this back into \eqref{splitortho} yields
\begin{multline*}
 \abs{n\EE\sqa{\int_{\T_1} (T_n(y)-y-\nabla f_{n,t_n}(y))\cdot \nabla f_{n,t_n}(y)}}\\
 \les 1+ n \abs{\EE\sqa{\int_{\T_1} |\nabla f_{n,\frac{t_n}{2}}|^2}-\EE\sqa{\int_{\T_1} |\nabla f_{n,t_n}|^2}}\les 1
\end{multline*}
where in the last line we used once more \eqref{traceformula}.
This concludes the proof of \eqref{quasi-orth}.
\end{proof}
\begin{remark}\label{rem:optimalcost}
Let us point out that the only source of suboptimality in \eqref{eq:ncost} lies in \eqref{eq:lastsuboptim} where we use in a crucial way the bound on $\nabla^2 f_{n,t_n}$ from Lemma \ref{lem:heat}. Indeed, if we knew (using the notation from the proof of Theorem \ref{theo:maincost}) that analogously to \eqref{eq:mapPoisson}
\[
 n\EE\sqa{\int_{\T_1\times \T_1}\int_0^1 \abs{\dot{X}_s-\nabla f_{n,r_n^2}(X_s)}^2 dsd\pi_n }\les 1
\]
and analogously to \eqref{traceformula},
\begin{equation}\label{tracemodif}
 n\EE\sqa{\int_{\T_1\times \T_1}\int_0^1 |\nabla f_{n,t}(X_s)|^2 ds d\pi_n}= \frac{|\log t|}{4\pi} + O(\sqrt{t})
\end{equation}
then the exact same proof would yield an error term which is of order one.

\end{remark}
\begin{remark}
 In a similar direction, let us also observe that arguing  as in the proof of Lemma \ref{lem:comp} we can obtain the analog of \eqref{traceformula} (this explicit computation cannot be done for \eqref{tracemodif})
 \[
  n\EE\sqa{\int_{\T_1} |\nabla f_{n,t}|^2 d\mu_n}= \frac{|\log t|}{4\pi} + O(\sqrt{t}).
 \]
Instead of relying on \eqref{eq:nmap} in the proof of \eqref{eq:ncost}, we could have thus used  \eqref{eq:mapPoisson} (with $t=t_n$). This would however make the proof slightly heavier without affecting the final result.
\end{remark}

 \bibliographystyle{abbrv}

\bibliography{OT.bib}
\end{document}